\DeclareFontFamily{U}{mathx}{\hyphenchar\font45}
\DeclareFontShape{U}{mathx}{m}{n}{
      <5> <6> <7> <8> <9> <10>
      <10.95> <12> <14.4> <17.28> <20.74> <24.88>
      mathx10
      }{}
\DeclareSymbolFont{mathx}{U}{mathx}{m}{n}
\DeclareMathAccent{\widecheck}{0}{mathx}{"71}
\DeclareMathAccent{\wideparen}{0}{mathx}{"75}
\newcommand{\coleq}{\coloneqq}
\newcommand{\bine}{\mathbin{\varepsilon}}
\newcommand{\e}{\varepsilon}
\theoremstyle{plain}
\newtheorem{theorem}{Theorem}
\newtheorem*{definition*}{Definition}
\newtheorem{proposition}[theorem]{Proposition}
\newtheorem{remark}[theorem]{Remark}
\newtheorem*{remark*}{Remark}
\newcommand{\cB}{{\mathcal{B}}}
\newcommand{\cC}{\mathcal{C}}
\newcommand{\cD}{\mathcal{D}}
\newcommand{\cE}{\mathcal{E}}
\newcommand{\cH}{\mathcal{H}}
\newcommand{\cL}{\mathcal{L}}
\newcommand{\cM}{\mathcal{M}}
\newcommand{\cN}{\mathcal{N}}
\newcommand{\cO}{\mathcal{O}}
\newcommand{\cS}{\mathcal{S}}
\newcommand{\bN}{\mathbb{N}}
\newcommand{\abso}[1]{\left|#1\right|}
\newcommand{\norm}[1]{\left\lVert#1\right\rVert}
\title{A survey on duals\\of topological tensor products}
\author{E.~A.~Nigsch\footnote{Wolfgang-Pauli-Institut, Oskar-Morgenstern-Platz 1, 1090 Wien, Austria. e-mail: \href{mailto:eduard.nigsch@univie.ac.at}{eduard.nigsch@univie.ac.at}. Corresponding author.}, N. Ortner\footnote{Institut für Mathematik, Universität Innsbruck, Technikerstraße 13, 6020 Innsbruck, Austria.}}
\date{September 23, 2016}
\begin{document}

\maketitle

\begin{abstract}
We give a survey on classical and recent results on dual spaces of topological tensor products as well as some examples where these are used.
\end{abstract}

{\bfseries Keywords: }topological tensor product, dual space

{\bfseries MSC2010 Classification: 46A32, 46F05}

\section{Introduction}

The (erroneous!) examples
\begin{alignat*}{3}
(\cD'_x \widehat\otimes \cD'_y)'_b & \cong \cD_x \widehat\otimes \cD_y \qquad && \lightning \\
(\cE_x \widehat\otimes \cD'_y)'_b & \cong \varinjlim_{m} ( \cE^{\prime m}_x \widehat\otimes \cD_y)\qquad && \lightning
\end{alignat*}
show that the determination of the strong duals of the space of distributions $\cD'_x \widehat\otimes \cD'_y \cong \cD'_{xy}$ \cite[Prop.~28, p.~98]{zbMATH03145498} or of the space of semiregular distributions $\cE_x \widehat\otimes \cD'_y$ \cite[p.~99]{zbMATH03145498} is far from being trivial.

The first of these isomorphisms would be a consequence of the isomorphism $\cD_x \widehat\otimes \cD_y \cong \cD_{xy}$ wrongly claimed to hold in \cite[p.~500]{Jarchow} by taking into account on the left-hand side the isomorphisms
\[ \cD_{xy} \cong (\cD'_{xy})'_b \cong ( \cD'_x \widehat\otimes \cD'_y )'_b \]
which hold due to reflexivity of $\cD$ and the kernel theorem. Its correct form uses the \emph{inductive} tensor product topology $\iota$ on $\cD_x \otimes \cD_y$ (cf.~\cite[Chap.~II, \S 3, n°3, p.~84]{zbMATH03199982}) which gives
\[ (\cD'_x \widehat\otimes \cD'_y)'_b \cong \cD_x \widehat\otimes_\iota \cD_y. \]

The second isomorphism is given in \cite[Proposition 1, p.~112]{MR0090777} in the wrong form above, and in \cite[Corollary 1, p.~116]{MR0090777} in the corrected form
\[ (\cE_x \widehat\otimes \cD'_y)'_b \cong \varinjlim_{m} ( \cE^{\prime m}_{c,x} \widehat\otimes \cD_y), \]
wherein $\cE^{\prime m}_c$ is the space $\cE^{\prime m}$ endowed with the topology of uniform convergence on compact sets.

These difficulties are further exemplified by A.~Grothendieck's example \cite[Ch.~II, \S 4, n°1, Corollaire 2, p.~98]{zbMATH03199982} of the strong dual of the space of semiregular and semicompact distributions,
\[ (\cE_x \widehat \otimes \cE'_y)'_b \neq \cE'_x \widehat\otimes_\iota \cE_y. \]
In fact, the strong dual of $\cE_x \widehat\otimes \cE_y'$ is not quasi-complete, while the space $\cE'_x \widehat\otimes_\iota \cE_y$ of trace class operators on $\cE$ (cf.~\cite[Chap.~I, \S 3, n°2, D\'efinition 4, p.~80]{zbMATH03199982}) is complete.

Summarizing, one has to be cautious when applying an isomorphism of the form $(E \widehat\otimes F)'_b \cong E'_b \widehat\otimes F'_b$: this isomorphism holds only under certain restrictive conditions while the general situation is much more complicated and delicate to handle, as we will see.

Throughout this article we use the standard notation of \cite{zbMATH03199982,FDVV,zbMATH03145498,zbMATH03145499}.

\section{Classical textbook results}\label{sec_class}

We now give a list of the classical results on duals of tensor products which can be found in the literature. Throughout this section, let $\cH$ and $F$ be locally convex spaces. We follow the usual terminology when we say that a locally convex space is of type (F), (DF), (FM) or (DFM). The topologies of $\cH'_b$, $\cH'_\lambda$ and $\cH'_c$ are those of uniform convergence on bounded, on precompact and on absolutely convex compact subset of $\cH$, respectively.

The first group of results concerns the case where $\cH$ is nuclear and an additional assumption holds for both $\cH$ and $F$:

\begin{tabular}{@{}llll@{}} \toprule
 $\cH$ (nuclear) & $F$ & Duality result & Reference \\
\midrule
(F) & (F) & $(\cH \widehat\otimes F)'_b \cong \cH'_b \widehat\otimes F'_b$ & \cite[Prop. 50.7, p.~524]{Treves}, \\
& & & \cite[9.9 Thm., p.~175]{Schaefer} \\
(F) / (DF) & (F) / (DF) & $(\cH \widehat\otimes F)'_b \cong \cH'_b \widehat\otimes F'_b$ & \cite[Ch.~II, Thm.~12, p.~76]{zbMATH03199982} \\ \bottomrule
\end{tabular}

The second group of results, going back to \cite{zbMATH03386564}, omits the assumption of nuclearity of $\cH$ but in some places employs the $\bine$-product (see \cite{zbMATH03145498}) instead of the completed tensor product. For arbitrary locally convex spaces $\cH$ and $F$, again with symmetric assumptions, we have:

\begin{tabular}{@{}llll@{}} \toprule
 $\cH$ & $F$ & Duality result & Reference \\ \midrule
(F) & (F) & $(\cH \widehat\otimes_\pi F)'_\lambda \cong \cH'_\lambda \bine F'_\lambda$ & \cite[\S 45.3(1), p.~301]{Koethe2} \\
 & & $(\cH \bine F)'_\lambda \cong \cH'_\lambda \widehat\otimes_\pi F'_\lambda$ & \cite[\S 45.3(5), p.~302]{Koethe2} \\
(FM) & (FM) & $(\cH \bine F)'_b \cong \cH'_b \widehat\otimes_\pi F'_b$ & \cite[\S 45.3(7), p.~304]{Koethe2} \\
(DFM) & (DFM) & $(\cH \widehat\otimes_\pi F)'_b \cong \cH'_b \bine F'_b$ & \cite[\S 45.3(7), p.~304]{Koethe2} \\
& & $(\cH \widehat\otimes_\pi F)'_\lambda \cong \cH'_\lambda \bine F'_\lambda$ & \cite[1. Satz, p.~212]{zbMATH03498641} \\ \bottomrule
\end{tabular}

In \cite[16.7, p.~346]{Jarchow} for two Fr\'echet spaces $\cH$, $F$ also the duals of $\cH \widehat\otimes_\pi F$ and $\cH \bine F$ for a topology $\gamma$ defined in \cite[9.3, p.~178]{Jarchow} are determined; moreover, the result of the last line also holds for (DCF)-spaces, see \cite{zbMATH03577774}.

The third group of results, which are less well-known as they do not appear in classical textbooks on locally convex spaces, are given with asymmetric conditions on the spaces $\cH$ and $F$ plus a supplementary condition; in order to keep the following table readable we abbreviate the properties of being \emph{complete, quasi-complete, nuclear} and \emph{semireflexive} by the letters \emph{c,q,n} and \emph{s}, respectively. Concerning the supplementary conditions in the third column, $\beta$-$\beta$ signifies $\beta$-$\beta$-decomposability of the bounded subsets of $\cH \widehat \otimes F$, and analogously for $\gamma$-$\gamma$ (see \cite[p.~15]{zbMATH03145499}).

\begin{tabular}{@{}lllll@{}} \toprule
$\cH$ & $F$ & supp. & Duality result & Reference \\ \midrule
cn & cs & $(\cH \widehat\otimes F)'_b$ c & $(\cH \widehat\otimes F)'_b \cong \cH'_b \widehat\otimes_\iota F'_b$ & \cite[Ch.~II, Corollaire, p.~90]{zbMATH03199982} \\
n & & $\beta$-$\beta$ & $(\cH \widehat\otimes F)'_b \cong \cH'_c ( F'_b; \e)$ & \cite[Prop.~22, p.~103]{zbMATH03145499} \\
nq & q & $\gamma$-$\gamma$ & $(\cH \widehat\otimes F)'_c \cong \cH'_c ( F_c'; \e)$ & \cite[Prop.~22, p.~103]{zbMATH03145499} \\ \bottomrule
\end{tabular}

We remark that in \cite[Prop.~22, p.~103]{zbMATH03145499} no assumptions on the completeness of $\cH$ and $F$ are made; only for the last result we need their quasi-completeness. Furthermore, if one does not have $\beta$-$\beta$ or $\gamma$-$\gamma$-decomposability, the respective isomorphism holds only algebraically.

\section{Examples (I)}

We now give examples where the results of the first and second group above (involving symmetric conditions on $\cH$ and $F$) do not apply and the third group has to be involved. First, by Grothendieck's result we have
\begin{alignat*}{2}
 (\cD' \widehat\otimes \cS)'_b & \cong \cD \widehat\otimes_\iota \cS', & \qquad (\cD' \widehat\otimes \cS')'_b \cong \cD \widehat\otimes_\iota \cS = \overline{\cD}(\cS), \\
 (\cD' \widehat\otimes \cE)'_b & \cong \cD \widehat\otimes_\iota \cE', & \qquad (\cD' \widehat\otimes \cE')'_b \cong \cD \widehat\otimes_\iota \cE = \overline{\cD}(\cE).
\end{alignat*}

(The completeness of $(\cD' \widehat\otimes \cE)'_b$ and $(\cD' \widehat\otimes \cS)'_b$ follows from the bornologicity of $\cD' \widehat\otimes \cE$ and $\cD' \widehat\otimes \cS$.)

Moreover, the following \emph{algebraic} identities hold
\begin{gather*}
 (\cD' \widehat\otimes \cC_0)' = \cD \widehat\otimes \cM^1 = \cL_b ( \cD', \cM^1), \\
(\cD' \widehat\otimes L^1)' = \cD \widehat\otimes L^\infty = \cL_b ( \cD', L^\infty), \\
(\cD' \widehat\otimes \dot\cB^m)' = \cD \widehat\otimes \cD^{\prime m}_{L^1} = \cL_b ( \cD', \cD^{\prime m}_{L^1} ).
\end{gather*}

(Note that $\dot\cB^0 = \cC_0$, $\cD^{\prime 0}_{L^1} = \cM^1$.)

In order to show that these are also topological identities one requires that the bounded subsets of $\cD' \widehat\otimes F$, where $F$ is one of $\cC_0$, $L^1$ or $\dot\cB^m$, are $\beta$-$\beta$-decomposable, which follows from the proof of \cite[Proposition 1, p.~16]{zbMATH03145499}.

\section{The need for generalization}

In the study of convolution of distributions the dual space
\[ (\cD' \widehat\otimes \cD'_{L^1})'_b \cong \cD \widehat\otimes_\iota \cB = \overline{\cD}(\cB) \]
of the space of partially summable distributions is determined \cite[Prop.~3, p.~541]{zbMATH03163214}. This result does not follow from Grothendiecks duality Lemma cited in the third table because $\cD'_{L^1}$ is not semi-reflexive. Duals of tensor products $\cH \widehat\otimes F$ with spaces $F$ being not necessarily semi-reflexive ($\cH$ nuclear) \emph{were recently determined} in \cite{bdot} using the following results, of which the second is a generalization of \cite[Chap.~II, Corollaire, p.~90]{zbMATH03199982}:

\begin{proposition}[{\cite[Proposition 8]{bdot}}]\label{prop8}
 Let $\cH = \varinjlim_k \cH_k$ be the strict inductive limit of nuclear Fr\'echet spaces $\cH_k$ and $F$ be the strong dual of a distinguished Fr\'echet space. Then $(\cH'_b \widehat\otimes F)'_b \cong \overline{\cH}(F'_b) \coleq \varinjlim_k ( \cH_k(F'_b))$. The space $\overline{\cH}(F'_b)$ is a complete, strict (LF)-space and $\cH'_b \widehat\otimes F$ is distinguished. If $F$ is reflexive then $\cH'_b \widehat\otimes F$ is reflexive, too.
\end{proposition}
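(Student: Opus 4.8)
The plan is to reduce everything to the level-$k$ spaces $\cH_k$, where the first-table duality (nuclear Fr\'echet against Fr\'echet) applies, and then to pass to the limit. Writing $F = G'_b$ with $G$ a distinguished Fr\'echet space, so that $F'_b = G''_b$ is again Fr\'echet, I would first record the structural facts: $\cH$ is nuclear, reflexive and complete, and as the strong dual of a strict inductive limit it has the reduced projective representation $\cH'_b \cong \varprojlim_k (\cH_k)'_b$ with (DFN)-steps. Since $\cH'_b$ and each $(\cH_k)'_b$ are nuclear, the completed tensor product coincides with the $\e$-product, and because the $\e$-product commutes with reduced projective limits in one variable I would obtain the topological identity
\[ \cH'_b \widehat\otimes F = \cH'_b \bine F \cong \varprojlim_k \bigl( (\cH_k)'_b \bine F \bigr) = \varprojlim_k \bigl( (\cH_k)'_b \widehat\otimes F \bigr). \]
By the first-table isomorphism applied to the nuclear Fr\'echet space $\cH_k$ and the Fr\'echet space $G$, each step is the strong dual $(\cH_k)'_b \widehat\otimes F = (\cH_k)'_b \widehat\otimes G'_b \cong (\cH_k \widehat\otimes G)'_b$ of the nuclear Fr\'echet space $\cH_k \widehat\otimes G$; in particular it is a (DF)-space, so $\cH'_b \widehat\otimes F$ is a reduced projective limit of (DF)-spaces.

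Dualizing this representation turns the projective limit into the inductive limit of the duals,
\[ (\cH'_b \widehat\otimes F)'_b \cong \varinjlim_k \bigl( (\cH_k)'_b \widehat\otimes F \bigr)'_b = \varinjlim_k (\cH_k \widehat\otimes G)''_b, \]
so that the whole statement rests on the level-$k$ identity
\[ (\cH_k \widehat\otimes G)''_b \cong \cH_k \widehat\otimes G''_b = \cH_k \widehat\otimes F'_b = \cH_k(F'_b). \]
This is the step I expect to be the main obstacle, and it is exactly the point at which distinguishedness of $G$ replaces Grothendieck's semireflexivity hypothesis. Algebraically the bidual of the nuclear Fr\'echet space $\cH_k \widehat\otimes G$ is $\cH_k \widehat\otimes G''_b$, since $\cH_k$ is reflexive ($\cH_k'' = \cH_k$); the delicate part is that this identification be \emph{topological}, i.e.\ that the strong topology of the bidual agree with the natural Fr\'echet topology of $\cH_k \widehat\otimes G''_b$. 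I would establish this by using that $G$ distinguished forces $G'_b$ to be barrelled, which is precisely what controls the equicontinuous, hence strongly bounded, subsets of $(\cH_k \widehat\otimes G)'_b = (\cH_k)'_b \widehat\otimes G'_b$; without it one obtains only an algebraic isomorphism, in accordance with the $\beta$-$\beta$-caveat recorded after the third table.

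It remains to read off the structural assertions. The space $\cH_k(F'_b) = \cH_k \widehat\otimes F'_b$ is nuclear Fr\'echet (the factor $F'_b = G''_b$ being Fr\'echet), and since the linking maps $\cH_k \hookrightarrow \cH_{k+1}$ are topological embeddings onto closed subspaces, tensoring with $F'_b$ preserves this (nuclear Fr\'echet spaces being exact for $\widehat\otimes$); hence $\overline{\cH}(F'_b) = \varinjlim_k \cH_k(F'_b)$ is a strict (LF)-space, and therefore complete and barrelled, which also confirms the regularity used in passing to the dual above. In particular its strong dual $\cH'_b \widehat\otimes F$ is distinguished, since a space is distinguished exactly when its strong dual is barrelled. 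Finally, if $F$ is reflexive then $F'_b = G''_b$ is reflexive, each $\cH_k \widehat\otimes F'_b$ is a reflexive Fr\'echet space and $\overline{\cH}(F'_b)$ is a reflexive (LF)-space; reflexivity of $\cH'_b \widehat\otimes F$ then follows by repeating the computation to identify the bidual $(\overline{\cH}(F'_b))'_b \cong \varprojlim_k \bigl( (\cH_k)'_b \widehat\otimes F''_b \bigr) = \cH'_b \widehat\otimes F$, using $F''_b = F$. Throughout, the one genuinely nontrivial verification is the topological bidual identity of the second paragraph.
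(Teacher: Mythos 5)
First, a point of order: the survey does not actually prove Proposition~\ref{prop8}; it is quoted from the external reference \cite{bdot}, so there is no in-paper proof to compare against. Judged on its own merits, your proposal has a genuine gap, and it sits exactly at the step you treat as routine. The reduction $\cH'_b \widehat\otimes F \cong \varprojlim_k \bigl( (\cH_k)'_b \widehat\otimes F \bigr)$ in your first paragraph is fine (nuclearity, $\widehat\otimes = \bine$, and commutation of $\bine$ with reduced projective limits), but the assertion that ``dualizing this representation turns the projective limit into the inductive limit of the duals'' is precisely the content of the theorem and is \emph{false} for general reduced countable projective limits: if $P = \varprojlim_k P_k$ is the canonical reduced projective limit of Banach spaces associated with a non-distinguished Fr\'echet space $P$, then $P' = \bigcup_k P_k'$ holds algebraically, yet $\varinjlim_k (P_k)'_b$ is bornological while $P'_b$ is not, so the two topologies differ. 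In the situation at hand, the topological validity of the interchange $\bigl(\varprojlim_k E_k\bigr)'_b \cong \varinjlim_k (E_k)'_b$ for $E_k = (\cH_k)'_b \widehat\otimes F$ is essentially equivalent to the distinguishedness of $\cH'_b \widehat\otimes F$ --- which is one of the assertions to be proven. So you have assumed a form of the conclusion; this is where the hypothesis that $G$ is distinguished must actually be put to work, via a genuine argument about bounded subsets of the projective limit ((DF)-space theory), not by formal transposition. Note that the proposition the survey \emph{does} prove only ever uses the unproblematic direction of this duality, namely that the strong dual of a strict (LF)-space is the projective limit of the strong duals of its steps (Bierstedt); the converse interchange is never claimed there in general.

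Conversely, the step you single out as ``the main obstacle'' --- the level-$k$ identity $(\cH_k \widehat\otimes G)''_b \cong \cH_k \widehat\otimes G''_b$ --- is not an obstacle at all and requires no distinguishedness: apply Grothendieck's Th\'eor\`eme 12 (second row of the paper's first table) twice, first to the nuclear Fr\'echet pair $(\cH_k, G)$ to get $(\cH_k \widehat\otimes G)'_b \cong (\cH_k)'_b \widehat\otimes G'_b$, and then to the pair $\bigl((\cH_k)'_b, G'_b\bigr)$ of (DF)-spaces with $(\cH_k)'_b$ nuclear to get $\bigl((\cH_k)'_b \widehat\otimes G'_b\bigr)'_b \cong \cH_k \widehat\otimes G''_b$. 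Hence your diagnosis of where the distinguishedness hypothesis enters is misplaced, and the sketched justification via barrelledness of $G'_b$ controlling equicontinuous sets is aimed at the wrong step (indeed, for a single step $k$ the whole statement holds for \emph{every} Fr\'echet space $G$; the hypothesis only becomes essential in the countable limit). The remaining structural assertions in your last paragraph (strictness and completeness of $\overline{\cH}(F'_b)$, and the distinguishedness and reflexivity statements) are correctly read off once the main identity is available, but they all stand or fall with the unproven limit-interchange step.
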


With the particular space $\cD_{-}$ for $\cH$ and $F$ the strong dual of a \emph{reflexive} Fr\'echet space, Proposition \ref{prop8} can be found in \cite[p.~315]{zbMATH03223921}.

 \begin{proposition}[{\cite[Proposition 9]{bdot}}]\label{prop9}
  Let $\cH$ be a Hausdorff, quasicomplete, nuclear, locally convex space with the strict approximation property, $F$ a quasicomplete, semireflexive, locally convex space. Let $F_0$ be a locally convex space such that $(\cH \widehat\otimes F)'_b = (\cH \widehat \otimes F_0)'_b$ and $(\cH \widehat\otimes F_0)'_b$ is complete. Then $(\cH \widehat\otimes F)'_b \cong \cH'_b \widehat\otimes_\iota F'_b$ and $(\cH \widehat\otimes F)'_b$ is semireflexive.
 \end{proposition}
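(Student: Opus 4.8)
The plan is to extend Grothendieck's Corollaire (the first line of the third table, which requires $\cH$ complete nuclear, $F$ complete semireflexive, and $(\cH\widehat\otimes F)'_b$ complete) to the present weaker hypotheses, using the strict approximation property together with nuclearity to compensate for having only quasicompleteness of the factors, and using the space $F_0$ solely to supply completeness of the dual. Indeed, the hypothesis on $F_0$ enters in exactly one place: since $(\cH\widehat\otimes F)'_b$ and $(\cH\widehat\otimes F_0)'_b$ coincide as topological vector spaces and the latter is complete, the space $(\cH\widehat\otimes F)'_b$ is itself complete.

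First I would fix the algebraic skeleton. Because $\cH$ is nuclear, the projective and injective tensor topologies coincide, so $\cH\widehat\otimes F=\cH\widehat\otimes_\e F$, and since $\cH$ carries the strict approximation property and both factors are quasicomplete this in turn equals the $\e$-product $\cH\bine F$; in particular every element of $(\cH\widehat\otimes F)'$ is a continuous bilinear form on $\cH\times F$, inside which the finite-rank forms $\cH'\otimes F'$ sit as a subspace. Nuclearity forces the bounded subsets of $\cH\widehat\otimes F$ to lie in closed absolutely convex hulls of products $B\otimes C$ of bounded sets, and these are relatively compact because $\cH$, being quasicomplete and nuclear, is semi-Montel; the strict approximation property then lets finite-rank operators approximate the identity uniformly on such sets, whence $\cH'\otimes F'$ is dense in $(\cH\widehat\otimes F)'_b$.

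Second, and this is where I expect the main difficulty, I would identify the trace of the strong topology on $\cH'\otimes F'$ with the inductive tensor topology $\iota$ of $\cH'_b\otimes F'_b$. Since the bounded subsets of $\cH\widehat\otimes F$ are $\beta$-$\beta$-decomposable into the products $B\otimes C$ above, dualising these products yields exactly the $\iota$-neighbourhoods on the dual side. The delicate point is to run Grothendieck's decomposition and the identification of the trace topology using only quasicompleteness of $\cH$ and $F$, the strict approximation property bridging the gap where Grothendieck invoked completeness of the factors. Granting (i) density of $\cH'\otimes F'$, (ii) that its trace topology is $\iota$, and (iii) completeness of $(\cH\widehat\otimes F)'_b$ from the $F_0$-hypothesis, I conclude that $(\cH\widehat\otimes F)'_b$ is the completion of $(\cH'_b\otimes F'_b,\iota)$, that is, $(\cH\widehat\otimes F)'_b\cong\cH'_b\widehat\otimes_\iota F'_b$.

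Finally, for semireflexivity I would reduce to showing that the strongly bounded subsets of $(\cH\widehat\otimes F)'$ are relatively $\sigma\bigl((\cH\widehat\otimes F)',(\cH\widehat\otimes F)''\bigr)$-compact. Here it is the semireflexivity of $F$ (rather than mere reflexivity) that must be exploited, in tandem with the semi-Montel property of $\cH$: decomposing a bounded set of $(\cH\widehat\otimes F)'_b$, via nuclearity, into the closed absolutely convex hull of a product $A\otimes B$ with $A\subseteq\cH'_b$ and $B\subseteq F'_b$ bounded, one plays the relative compactness coming from the $\cH$-side against the weak compactness furnished by semireflexivity of $F$ to obtain relative weak compactness of the hull. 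I expect this compactness verification, performed under only quasicompleteness and semireflexivity, to be the second genuinely technical step, analogous to the reflexivity assertion obtained in Proposition \ref{prop8}.
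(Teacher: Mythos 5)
You should know at the outset that the survey itself contains no proof of Proposition~\ref{prop9}: it is quoted from \cite[Proposition 9]{bdot}, whose announced purpose is exactly the generalization you describe, namely of Grothendieck's Corollaire (first row of the third table) from complete to quasicomplete factors, with the $F_0$-hypothesis serving only to supply completeness of $(\cH\widehat\otimes F)'_b$. That much of your plan is sound. The problems lie in the two steps you yourself flag as the technical core. The first genuine gap is your claim that ``nuclearity forces the bounded subsets of $\cH\widehat\otimes F$ to lie in closed absolutely convex hulls of products $B\otimes C$ of bounded sets.'' This is precisely $\beta$-$\beta$-decomposability, and it does \emph{not} follow from nuclearity: the survey lists it as a \emph{supplementary} condition alongside nuclearity in its third table, remarks that without it the corresponding isomorphisms hold only algebraically, and in Examples~(I) has to verify it case by case via the proof of \cite[Proposition 1, p.~16]{zbMATH03145499}. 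In general the question whether bounded sets of a completed tensor product decompose in this way is Grothendieck's ``probl\`eme des topologies,'' which has a negative answer; what nuclearity gives for free is a decomposition of the \emph{equicontinuous} subsets of the dual, not of the bounded subsets of $\cH\widehat\otimes F$. Since both your density argument and your identification of the trace topology rest on this premise, neither goes through as written.

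The second gap is the topological identification itself. Even granting decomposability, the polars of the sets $\overline{\Gamma}(B\otimes C)$ induce on $\cH'\otimes F'$ the topology of uniform convergence on products of bounded sets, which is in general strictly \emph{coarser} than $\iota$, the finest locally convex topology making $\cH'_b\times F'_b\to\cH'\otimes F'$ separately continuous; so ``dualising these products'' does not yield the $\iota$-neighbourhoods, and your argument would at best identify $(\cH\widehat\otimes F)'_b$ with the completion of $\cH'\otimes F'$ under this coarser topology. The correct mechanism runs in the opposite direction: separate continuity of $(x',y')\mapsto x'\otimes y'$ into the strong dual gives a continuous map $\cH'_b\otimes_\iota F'_b\to(\cH\widehat\otimes F)'_b$, which extends to $\cH'_b\widehat\otimes_\iota F'_b$ by completeness of the target (this is where $F_0$ enters), and the substance of the proof is then to show this map is bijective and open; it is there, and in the semireflexivity assertion, that semireflexivity of $F$, the semi-Montel property of the quasicomplete nuclear space $\cH$, and the strict approximation property must actually be deployed. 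In your proposal this part, like the final compactness verification for semireflexivity of the dual, is stated as an expectation rather than carried out, so the proof is incomplete precisely where the proposition is nontrivial.
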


Proposition \ref{prop9} and Proposition \ref{prop8} immediately imply
\begin{gather*}
(\cD' \widehat\otimes \dot\cB)'_b \cong \cD \widehat\otimes_\iota \cD'_{L^1}, \\
(\cD' \widehat\otimes \dot\cB')'_b \cong \cD \widehat\otimes_\iota \cD_{L^1} = \overline{\cD}(\cD_{L^1}), \\
(\cD' \widehat\otimes \cD'_{L^1})'_b \cong \cD \widehat\otimes_\iota \cB = \overline{\cD}(\cB).
\end{gather*}
We remark that these isomorphisms cannot be obtained by the results cited in Section \ref{sec_class}. In fact, because $\cD'$ is neither an (F)- nor a (DF)-space none of the results of the first two groups apply. Moreover, \cite[Prop.~22, p.~103]{zbMATH03145499} does not give a representation of the dual as a tensor product, and \cite[Chap.~II, Corollaire, p.~90]{zbMATH03199982} cannot be applied because $\cD'_{L^1}$ is not semi-reflexive: its bidual is $(\cD_{L^\infty})'$, which is strictly larger than $\cD'_{L^1}$.

\section{Examples (II)}

By means of Proposition \ref{prop8} we conclude that
\begin{gather*}
 (\cD' \widehat \otimes \cE^{\prime m})'_b \cong \cD \widehat\otimes_\iota ( \cE^{\prime m} )'_b = \overline{\cD} ( (\cE^{\prime m})'_b ), \\
 (\cD' \widehat\otimes \cS^{\prime m})'_b \cong \cD \widehat\otimes_\iota ( \cS^{\prime m})'_b = \overline{\cD}( (\cS^{\prime m})'_b)
\end{gather*}
($m$ finite), if we show that the Fr\'echet spaces $\cE^m$ and $\cS^m$ (see \cite[p.~90]{zbMATH03230708}) are distinguished. Let us prove these two facts because they were only mentioned in \cite{bdot}. The Fr\'echet space $\cS^m$ with the defining seminorms $p_k(\varphi) = \norm{ ( 1 + \abso{x}^2)^{k/2} \varphi}_\infty$, $k \in \bN_0$, $\varphi \in \cS^m$, is a quojection because the $p_k$ are norms \cite[p.~237]{MR1083567}. The chart in \cite[p.~202]{MR1150747} shows that a quojection is quasi-normable and distinguished.

The space $\cE^m$ is isomorphic to the infinite product of Banach spaces $G$, i.e., $\cE^m \cong G^{\bN}$ by \cite[Theorem 3, p.~13]{zbMATH01064602}
. By \cite[p.~107]{zbMATH03092845}, $G^\bN$ is quasinormable and hence, by \cite[Prop.~14, p.~108]{zbMATH03092845} $\cE^m$ is a distinguished space.

\section{Preduals}

In \cite[Prop.~3, p.~541]{zbMATH03163214}, also a predual of the space of partially summable distributions is determined, i.e.,
\[ (\overline{\cD}(\dot \cB))'_b = (\cD \widehat\otimes_\iota \dot\cB)'_b \cong \cD' \widehat\otimes \cD'_{L^1}, \]
as a consequence of \cite[Corollaire 3, p.~104]{zbMATH03145499}, i.e.,
\[ ( \overline{\cD}(F))'_b = ( \cD \widehat \otimes_\iota F)'_b \cong \cD' \widehat\otimes F'_b \]
if $F$ is a Fr\'echet space. We obtain, e.g.,
\begin{alignat*}{2}
 ( \overline{\cD} ( \cC_0))'_b & \cong \cD' \widehat\otimes \cM^1, & \qquad (\overline{\cD} ( \dot\cB^m))'_b & \cong \cD' \widehat\otimes \cD^{\prime m}_{L^1}, \\
 ( \overline{\cD} ( \cE^0))'_b &\cong \cD' \widehat\otimes \cM_{\textrm{comp}}, & \qquad (\overline{\cD}(\cE^m))'_b & \cong \cD' \widehat\otimes \cE^{\prime m}, \\
 ( \overline{\cD} ( \cS^m ))'_b &\cong \cD' \widehat\otimes \cS^{\prime m}.
\end{alignat*}

If we look for preduals of (completed) tensor products with other distribution spaces, e.g., of $\cD'_{+\Gamma} \widehat\otimes \cD'_{L^1}$, we need a slight generalization of L.~Schwartz' Corollaire 3:

\begin{proposition}
Let $\cH = \varinjlim_k \cH_k$ be a strict inductive limit where all $\cH_k$ as well as a further locally convex space $F$ are either Fr\'echet or complete barrelled (DF)-spaces. Suppose in addition that all $\cH_k$ are nuclear. Then
 \[ (\cH \widehat\otimes_\iota F)'_b = (\overline{\cH}(F))'_b \cong \cH'_b \widehat\otimes F'_b. \]
\end{proposition}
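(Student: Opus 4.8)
The plan is to reduce the assertion to the classical symmetric duality theorem for nuclear (F)/(DF)-spaces, applied separately at each step of the inductive limit, and then to transport the resulting isomorphisms through the limit on both sides.

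First I would record the two identities making up the left-hand side. Since the inductive tensor topology $\iota$ commutes with inductive limits, $\cH \otimes_\iota F = \varinjlim_k (\cH_k \otimes_\iota F)$; because each $\cH_k$ is nuclear one has $\pi = \e$ on $\cH_k \otimes F$, and the (F)/(DF)-structure of the two factors forces $\iota = \pi$ there as well, so that $\cH_k \widehat\otimes_\iota F = \cH_k \widehat\otimes F$. As $\cH = \varinjlim_k \cH_k$ is strict and the spaces $\cH_k \widehat\otimes F$ are complete, the spectrum $\bigl(\cH_k \widehat\otimes F\bigr)_k$ is a strict, regular inductive limit of complete spaces, hence itself complete; it therefore coincides with the completion, giving
\[ \overline{\cH}(F) = \cH \widehat\otimes_\iota F = \varinjlim_k \bigl( \cH_k \widehat\otimes F \bigr). \]
This is the first (essentially definitional) equality of the statement.

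Next I would dualize the spectrum term by term. Each $\cH_k$ is nuclear and of type (F) or (DF), and $F$ is of type (F) or (DF), so Grothendieck's theorem (the second row of the first table in Section~\ref{sec_class}) gives $(\cH_k \widehat\otimes F)'_b \cong (\cH_k)'_b \widehat\otimes F'_b$ for every $k$. Since a strict inductive limit of a countable spectrum of complete barrelled spaces is regular, its strong dual is the reduced projective limit of the strong duals, with transition maps the transposes of the connecting embeddings $\cH_k \hookrightarrow \cH_{k+1}$; hence
\[ (\cH \widehat\otimes_\iota F)'_b \cong \varprojlim_k (\cH_k \widehat\otimes F)'_b \cong \varprojlim_k \bigl( (\cH_k)'_b \widehat\otimes F'_b \bigr). \]

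The remaining and \emph{main} step is to interchange the projective limit with the tensor factor $F'_b$, that is, to identify
\[ \varprojlim_k \bigl( (\cH_k)'_b \widehat\otimes F'_b \bigr) \cong \Bigl( \varprojlim_k (\cH_k)'_b \Bigr) \widehat\otimes F'_b = \cH'_b \widehat\otimes F'_b, \]
where $\cH'_b = \varprojlim_k (\cH_k)'_b$ is the usual description of the strong dual of a strict inductive limit. Here the key input is that each $(\cH_k)'_b$, being the strong dual of a nuclear (F)- or (DF)-space, is nuclear, so that $\widehat\otimes$ agrees with the injective tensor product $\widehat\otimes_\e$ on all products in sight, and $\widehat\otimes_\e$ is exact with respect to reduced projective limits. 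The delicate points—where the completeness, barrelledness and strictness hypotheses genuinely enter—are to confirm that the projective limit is reduced, that $\varprojlim_k (\cH_k)'_b$ really reconstructs $\cH'_b$ as a \emph{topological} space, and that no completion defect arises when pulling $\varprojlim_k$ through $\widehat\otimes F'_b$; this commutation of the completed tensor product with the projective limit is the crux of the argument. Chaining the three isomorphisms then yields $(\overline{\cH}(F))'_b \cong \cH'_b \widehat\otimes F'_b$.
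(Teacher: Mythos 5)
Your proposal is correct and follows essentially the same route as the paper's proof: identify $\overline{\cH}(F) = \cH\widehat\otimes_\iota F$ with the strict inductive limit $\varinjlim_k(\cH_k\widehat\otimes F)$ using nuclearity and the (F)/(DF) hypotheses to get $\iota=\pi=\e$ on each step (the paper cites Schwartz and Bargetz here), pass to the dual as a reduced projective limit of the $(\cH_k\widehat\otimes F)'_b$ (the paper cites Bierstedt), apply Grothendieck's nuclear (F)/(DF) duality theorem stepwise, and finally commute $\varprojlim_k$ with $\widehat\otimes F'_b$ (the paper cites Jarchow's theorem on projective limits and the $\e$-tensor product, which is precisely the exactness you invoke via nuclearity of $(\cH_k)'_b$). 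The only difference is presentational: where the paper quotes the literature for the delicate limit--tensor and limit--dual interchanges, you sketch the underlying reasons, and your sketches correctly identify the ingredients those citations supply.
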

\begin{proof}We first we note that
\[ \overline{\cH}(F) = \varinjlim_k \cH_k(F) = \varinjlim_k (\cH_k \widehat\otimes_\iota F) \]
by \cite[Prop.~11, p.~46]{zbMATH03145498} and because $\cH_k \widehat\otimes F = \cH_k \widehat\otimes_\iota F$ by our assumptions (see \cite[p.~13]{zbMATH03145499}). Moreover,
\[ \varinjlim_k ( \cH_k \widehat\otimes_\iota F) = (\varinjlim_k \cH_k)\widehat\otimes_\iota F = \cH \widehat\otimes_\iota F \]
by \cite[Corollary 5, p.~75]{BargetzDiss} because the inductive limit $\varinjlim_k \cH_k(F)$ is strict. This gives the first isomorphism by transposition. We then have
 \[ ( \overline{\cH}(F))'_b \cong \varprojlim_k ( \cH_k \widehat\otimes F)'_b \]
 by \cite[1.~Proposition, p.~57]{MR979516}. By \cite[Th\'eor\`eme 12, p.~76]{zbMATH03199982}, cited in the first table, we obtain
 \[ ( \cH_k \widehat\otimes F)'_b \cong ( \cH_k)'_b \widehat\otimes F'_b, \]
 and hence together,
 \[ ( \overline{\cH} ( F))'_b \cong \varprojlim_{k} ( \cH_k)'_b \widehat\otimes F'_b. \]
 Finally,
 \[ \varprojlim_k ( ( \cH_k )'_b \widehat\otimes F'_b) = ( \varprojlim_k ( \cH_k)'_b) \widehat\otimes F'_b \]
 by \cite[2.~Theorem, p.~332]{Jarchow}, which yields $( \overline{\cH}(F))'_b = \cH'_b \widehat\otimes F'_b$.
\end{proof}

\begin{remark}With regard to the spaces
\[ ( \cD' \widehat\otimes \cE^m)'_b, (\cD' \widehat\otimes \cS^m)'_b, (\cD' \widehat\otimes \cD_{L^1})'_b, (\cS \widehat\otimes \cO_C)'_b \]
or, more generally, $(E \widehat\otimes \cO_C)'_b$ with $E = \cO_C, \cO_M, \cE', \cD'$, we know that in each of these cases the dual is obtained by \cite[Prop.~22, p.~103]{zbMATH03145499} and \cite[Chap.~II, §2, n°1, Corollaire 4.1, p.~39]{zbMATH03199982} as
\[ (E \widehat \otimes F)'_b \cong \cN(E, F') \subseteq E' \widehat\otimes_\iota F' \]
where $\cN(E,F')$ denotes the space of nuclear operators from $E$ to $F'$. However, it is an open question whether these inclusions are strict.
\end{remark}

{\bfseries Acknowledgments. } E.~A.~Nigsch was supported by the Austrian Science Fund (FWF) grant P26859.



\end{document}